\theoremstyle{plain}
\newtheorem{thm}{Theorem}[section]
\newtheorem{cor}[thm]{Corollary}
\newtheorem{prop}[thm]{Proposition}
\theoremstyle{definition}
\newtheorem{defn}[thm]{Definition}
\theoremstyle{remark}
\newtheorem{rem}[thm]{Remark}
\newtheorem{exa}[thm]{Example}
\theoremstyle{plain}
\numberwithin{equation}{section}
\newcommand{\del}{\delta}
\newcommand{\R}{{\mathbb R}}
\newcommand{\N}{{\mathbb N}}
\newcommand{\Z}{{\mathbb Z}}
\newcommand{\calC}{{\mathcal C}}
\newcommand{\calD}{{\mathcal D}}
\newcommand{\calG}{{\mathcal G}}
\newcommand{\calL}{{\mathcal L}}
\newcommand{\calN}{{\mathcal N}}
\newcommand{\calV}{{\mathcal V}}
\newcommand{\bfVa}{{\Vec{\textbf{a}}}}
\def\udot#1{\ifmmode\oalign{$#1$\crcr\hidewidth.\hidewidth
    }\else\oalign{#1\crcr\hidewidth.\hidewidth}\fi}
\def\R{\mathbb{R}}
\def\Z{\mathbb{Z}}
\def\T{\mathbb{T}}
\begin{document}
	
\title[]{Sharp Mei's lemma with different bases}
\author{Theresa C. Anderson}
\author{Bingyang Hu}

\address{Theresa C. Anderson: Department of Mathematics, Purdue University, 150 N. University St., W. Lafayette, IN 47907, U.S.A.}%
\email{tcanderson@purdue.edu}

\address{Bingyang Hu: Department of Mathematics, Purdue University, 150 N. University St., W. Lafayette, IN 47907, U.S.A.}%
\email{hu776@purdue.edu}

\begin{abstract}
In this paper, we prove a sharp Mei's Lemma with assuming the bases of the underlying general dyadic grids are different. As a byproduct, we specify all the possible cases of adjacent general dyadic systems with different bases.  The proofs have connections with certain number-theoretic properties. 

\end{abstract}
\date{\today}

\thanks{The first author is funded by NSF DMS 1954407 in Analysis and Number Theory. }

\maketitle

\section{Introduction}
The purpose of this paper is to give an optimal description of the adjacency of general dyadic systems in $\R^d$ with different bases. The study of describing dyadic systems in a refined manner dates back to the work \cite{Conde} of Conde Alonso, in which, he proved $d+1$ is the optimal number of dyadic systems in $\R^d$ to guarantee adjacency. 

However, Conde Alonso's result only implies the existence of such a collection of $d+1$ dyadic systems. Our goal is to understand for a given collection of $d+1$ dyadic grids (or more general, $n$-dyadic grids), what the necessary and sufficient conditions are so that such a collection is adjacent. In our recent paper \cite{AHJOW} joint with Jiang, Olson and Wei, we answered this question when $d=1$, and later in \cite{AH} we extended this result to higher dimensions by studying the \emph{fundamental structures of $d+1$ $n$-adic systems in $\R^d$}.  Note that in both \cite{AHJOW} and \cite{AH}, the bases of the given $d+1$ grids are the same. 

In this paper, we further generalize these results to the case when the bases of these $d+1$ grids are different. Moreover, we are also able to specify all the possible cases for adjacent systems with different bases. Let us begin with the definition of $n$-adic systems in $\R^d$, which is our main object of interest in this paper. 

\begin{defn}
Given $n \in \N, n \ge 2$, a collection $\calG$ of left-closed and right-open cubes on $\R^d$ (that is, a collection of cubes in $\R^d$ of the form
$$
[a_1, a_1+\ell) \times \dots \times [a_d, a_d+\ell), \quad a_i \in \R, i=1, \dots, d, 
$$
where $\ell>0$ is the \emph{sidelength} of such a cube) is called \emph{a general dyadic grid with base $n$ (or $n$-adic grid)} if the following conditions are satisfied:
\begin{enumerate}
\item [(i).] For any $Q \in \calG$, its sidelength $\ell(Q)$ is of the form $n^k, k \in \Z$;
\item [(ii).] $Q \cap R \in \{Q, R, \emptyset\}$ for any $Q, R \in \calG$;
\item [(iii).] For each fixed $k\in \Z$, the cubes of a fixed sidelength $n^k$ form a partition of $\R^d$.
\end{enumerate}
Note that when $n=2$, the above definition refers to the classical \emph{dyadic system} in $\R^d$, which we denote by $\calD$. 
\end{defn}

An important property for such a structure is the following optimal dyadic covering theorem due to Conde \cite{Conde}, which is also known as the \emph{optimal Mei's lemma}.  

\begin{thm} {\cite[Theorem 1.1]{Conde}} \label{CondeThm}
There exists $d+1$ dyadic grids $\calD_1, \dots, \calD_{d+1}$ (with base $2$) of $\R^d$ such that every Euclidean ball $B$ (or every cube) is contained in some cube $Q \in \bigcup\limits_{i=1}^{d+1} \calD_i$ satisfying that $\textrm{diam}(Q) \le C_d \textrm{diam}(B)$. The number of dyadic systems is optimal. 
\end{thm}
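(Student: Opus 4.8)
The plan is to prove Theorem~\ref{CondeThm} in two halves --- \emph{existence} of $d+1$ base-$2$ grids with the covering property, and \emph{optimality} of the number $d+1$ --- reducing each to an elementary one-dimensional statement. For a base-$2$ grid $\calG$ on $\R^d$ write $\mathrm{skel}(\calG,2^k)$ for the union of the boundaries of the sidelength-$2^k$ cubes of $\calG$; conditions (ii)--(iii) force $\mathrm{skel}(\calG,2^k)\subseteq\mathrm{skel}(\calG,2^m)$ whenever $2^m\le 2^k$ (a coarse cube is a union of finer ones), and also force $\calG$ to be a product $\calG=\calG^{(1)}\times\cdots\times\calG^{(d)}$ of base-$2$ grids on $\R$ (a standard consequence of the nesting and tiling axioms). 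I will also use that a Euclidean ball $B(x,r)$ is contained in a cube $Q$ of sidelength $L$ exactly when $\mathrm{dist}(x,\partial Q)\ge r$, since for a cube the $\ell^2$- and $\ell^\infty$-distances from an interior point to the boundary agree.

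\emph{Existence.} I would first reduce the covering property to a single-scale statement: it suffices to produce $d+1$ grids $\calD_1,\dots,\calD_{d+1}$ and a constant $\Lambda=\Lambda(d)$ such that for every $x\in\R^d$ and every $k\in\Z$ there is an index $i$ for which the sidelength-$2^k$ cube of $\calD_i$ containing $x$ has all of its faces at distance $\ge 2^k/\Lambda$ from $x$; given this, for a ball $B(x,r)$ one takes the least $k$ with $2^k\ge\Lambda r$ (so $2^k<2\Lambda r$), and the resulting cube contains $B(x,r)$ with diameter $<2\sqrt d\,\Lambda r$, yielding $C_d=\sqrt d\,\Lambda$ (cubes in place of balls are handled the same way). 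Now look for the $\calD_i$ of the form $\calG_i\times\cdots\times\calG_i$, the $d$-fold product of a single base-$2$ grid $\calG_i$ on $\R$ with scale-$2^k$ offset $\theta_i(k)\in\R/\Z$. Then the single-scale condition \emph{fails} at $(x,k)$ precisely when for every $i$ there is a coordinate $j=j(i)$ with $\|x_j/2^k-\theta_i(k)\|_{\R/\Z}<1/\Lambda$; since there are $d+1$ indices $i$ but only $d$ coordinates, the pigeonhole principle produces $i_1\ne i_2$ with $j(i_1)=j(i_2)$, whence $\|\theta_{i_1}(k)-\theta_{i_2}(k)\|_{\R/\Z}<2/\Lambda$. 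Thus the whole existence half reduces to the following one-dimensional problem: \emph{find $d+1$ base-$2$ grids on $\R$ whose scale-$2^k$ offsets are, for every $k\in\Z$, pairwise at least $\delta_d$ apart on $\R/\Z$ for some fixed $\delta_d>0$}; one then uses these in every coordinate and sets $\Lambda=2/\delta_d$.

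This last problem is the one genuinely non-obvious ingredient, and I expect it to be the main obstacle. The offset of a base-$2$ grid satisfies $\theta(k)\equiv 2\theta(k+1)\bmod 1$ by the nesting axiom, so the sequence of offsets is a bi-infinite orbit of the doubling map $T\colon x\mapsto 2x\bmod1$; the task is therefore to choose a bi-infinite $T$-orbit of a $(d+1)$-point configuration on $\R/\Z$ all of whose members have minimum gap $\ge\delta_d$. A fixed translate will not work, since the separation it provides at one scale degrades at coarser scales --- one needs a configuration genuinely stable under the dynamics. The clean choice: let $N$ be the least \emph{odd} integer with $N\ge d+1$. As $\gcd(2,N)=1$, the map $T$ permutes the $N$-point set $\tfrac1N\Z/\Z$ bijectively, and --- $N$ being odd --- this set contains no antipodal pair; hence $T$ carries every $(d+1)$-element subset of $\tfrac1N\Z/\Z$ to another one, and every such subset has all gaps $\ge\tfrac1N$. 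Fixing one $(d+1)$-element subset and propagating it by $T^{\pm1}$ thus gives the required $d+1$ grids with $\delta_d=\tfrac1N$, and hence $C_d=\sqrt d\,\Lambda=2\sqrt d\,N\le 2\sqrt d\,(d+2)$. For $d=1$ this recovers the classical one-third trick ($N=3$; the standard grid together with the grid whose offset alternates between $\tfrac13$ and $\tfrac23$). This is precisely where the number-theoretic features advertised in the abstract enter: the coprimality of $2$ and $N$, orbits of the doubling map, and denominators coprime to $2$.

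\emph{Optimality.} Finally I would show that $d$ grids never suffice, for any constant. Suppose $\calD_1,\dots,\calD_d$ are base-$2$ grids and every ball $B$ lies in some $Q\in\bigcup_{i=1}^d\calD_i$ with $\mathrm{diam}(Q)\le C\,\mathrm{diam}(B)$. Write $\calD_i=\calG_i^{(1)}\times\cdots\times\calG_i^{(d)}$, fix $r>0$, and let $L$ be the largest power of $2$ with $\sqrt d\,L\le 2Cr$. Choose $x\in\R^d$ so that for each $i$ the coordinate $x_i$ is a scale-$L$ cell-endpoint of the factor $\calG_i^{(i)}$ --- possible because these $d$ requirements constrain the $d$ distinct coordinates of $x$ separately. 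By the monotonicity of skeletons, $x_i$ is then a scale-$2^m$ cell-endpoint of $\calG_i^{(i)}$ for every $2^m\le L$, so the sidelength-$2^m$ cube of $\calD_i$ containing $x$ has a face through $x$ and therefore does not contain $B(x,r)$ --- and this holds for every $i$ and every $2^m\le L$. Meanwhile every cube of sidelength $\ge 2L$ has diameter $\ge2\sqrt d\,L>2Cr=C\,\mathrm{diam}(B(x,r))$ by maximality of $L$. Hence $B(x,r)$ lies in no admissible cube of $\bigcup_{i=1}^d\calD_i$, contradicting the covering hypothesis. So at least $d+1$ grids are needed, and by the existence half exactly $d+1$ suffice --- $d+1$ being exactly the threshold at which the pigeonhole in the existence argument kicks in, while $d$ grids can be matched one-to-one with the $d$ coordinates.
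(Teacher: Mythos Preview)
This theorem is not proved in the present paper: it is quoted verbatim from Conde's article \cite{Conde} (as the citation in the theorem heading and the surrounding prose make clear) and serves only as motivation for the paper's own results on adjacent grids with different bases. So there is no ``paper's own proof'' to compare your attempt against.

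That said, your argument is a correct self-contained proof of the cited result, and its architecture is close in spirit to Conde's: the pigeonhole step matching $d+1$ grids against $d$ coordinate directions is exactly the mechanism that makes $d+1$ the critical number, and your optimality half (choosing $x$ so that coordinate $i$ hits the skeleton of grid $i$) is the standard obstruction. A few remarks. First, the claim that every base-$2$ grid on $\R^d$ is a product of one-dimensional grids is not entirely trivial from the bare axioms (i)--(iii); it holds, and the present paper takes it for granted via the representation $\calG(n,\del,\calL_{\bfVa})$ in Definition~\ref{repsgrids} (with reference to \cite{AH}), but calling it ``a standard consequence'' without citation is a small gap in exposition. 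Second, your odd-$N$ construction is a clean and correct way to solve the one-dimensional offset problem---the key point being that multiplication by $2$ permutes $\tfrac{1}{N}\Z/\Z$ when $\gcd(2,N)=1$, so distinct offsets stay distinct (hence $\ge 1/N$ apart) at every scale. Third, your remark that this is where ``the number-theoretic features advertised in the abstract enter'' is a misreading: the abstract is referring to the paper's own Theorems~\ref{mainresult01} and~\ref{mainresult02} (specifically the rationality of $\log n_i/\log n_j$ and the gcd/prime-factorisation arguments in Section~3), not to anything in the proof of this background result.
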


The origin of the adjacency of dyadic systems is obscure but we believe that credit should be given to Okikiolu \cite{Okikiolu} and, for a somewhat weaker version, to Chang, Wilson and Wolff \cite{CWW}. Later in 2013, Hyt\"onen and Per\'ez \cite{HP} proved that Mei's lemma holds $2^d$ dyadic grids with the constant $C_d=6$, and in the same year, Conde Alonso \cite{Conde} showed the optimal number of the dyadic systems needed is $d+1$ but with a larger constant $C_d \simeq d$. Moreover in 2014, Cruz-Uribe \cite{DCU} gave a short proof of Mei's lemma for $3^d$ dyadic grids with a better constant $C_d=3$. We would also refer the reader for \cite{LN, LPW, PW, HK} and the references there in for more detailed information about the development of this property. The adjacency of dyadic systems are crucially used in harmonic analysis (for instance, by Lerner to prove the $A_2$ theorem in \cite{AL}, among many other recent papers on \emph{sparse domination}), functional analysis \cite{C2}, \cite{GJ}, \cite{LPW}, \cite{PW} and measure theory \cite{CP}.

Theorem \ref{CondeThm} motivates the following definition of the adjacency of a collection of $d+1$ general dyadic grids with different bases. Note that the adjacency we are considering in this note is more general than the one in \cite{AH} and \cite{AHJOW}. 

\begin{defn} \label{repdyadic}
Given $d+1$ general dyadic grids $\calG_1, \dots, \calG_{d+1}$, where the base of $\calG_i$ is $n_i$, $i=1, \dots, d+1$, we say they are \emph{adjacent} if for any open cube $Q \subseteq \R^d$ (or any ball), there exists $i \in \{1, \dots, d+1\}$, and $D \in \calG_i$, such that $Q$ is \emph{comparable} to $D$, in the sense that
\begin{enumerate}
    \item [(1).] $Q \subseteq D$;
    \item [(2).] $\ell(D) \le C\ell(Q)$, where the constant $C$ only allows to depend on $n_1, \dots, n_{d+1}$ and $d$, in particular, it is independent of the sidelength of the cubes $Q$ and $D$. 
\end{enumerate}
\end{defn}

The \emph{new feature} of the adjacency of general dyadic systems with different bases comes from the fact that the cubes from different grids living in different generations start interacting with each other, in both \emph{small scale case} (where the cube has sidelength less than or equal to $1$) and \emph{large scale case} (where the cube has sidelength great than $1$). This leads to the fact that some generations of the general dyadic grids make a significant contribution to the adjacency, while some make no contributions. This is quite different from the case considered in \cite{AHJOW} and \cite{AH} where there is only one base; the adjacency there is decided by cubes from all generations in different grids. 

\begin{exa} \label{20200922exa01}
Here is an easy way to produce adjacent general dyadic systems with different bases. 

To start with, we can take a known example of adjacent grids with the same bases (see, e.g., \cite[Page 786--787]{Conde}), and then change bases by deleting specific generations. For example, let $\calD_1$ and $\calD_2$ be two dyadic grids which are adjacent on $\R$, then we define $\calG_1:=\calD_1$ and
     \begin{equation} \label{20200910eq04}
     \calG_2:=\bigcup_{i \in \Z} \left\{ I \in \left(\calD_2 \right)_{4i} \right\}.
     \end{equation} 
Here are some remarks for the above example. 
\begin{enumerate}
    \item [(1).] It is easy to check that $\calG_1$ and $\calG_2$ are also adjacent on $\R$, while $\calG_2$ is of base $16$;
    
    \medskip
    
    \item [(2).] This construction easily suggests to the following fact: the optimal number that is needed to guarantee the adjacency for grids with different bases is also $d+1$;
    
    \item [(3).] It turns out that this ``changing bases trick" is the \emph{only} possible case for adjacent systems with different bases (see, Theorem \ref{mainresult02}). 
\end{enumerate}

Finally, note that not all generations in $\calG_1$ make a contribution to the adjacency. Indeed, let us define
$$
\calG_1':=\bigcup_{i \in \Z} \left\{ I \in \left(\calD_1 \right)_{4i} \right\}
$$
Note that by \cite[Theorem 3.8]{AHJOW}, $\calG_1'$ and $\calG_2$ are adjacent on $\R$ (note that both $\calG_1'$ and $\calG_2$ are of base $16$). This suggests that in the adjacent pair $\{\calG_1, \calG_2\}$, only the cubes in $\left(\calG_1\right)_{4i}, i \in \Z$ (that is, cubes only every four levels) contribute to the adjacency, while the cubes from other generations are redundant.

\end{exa}

Let us make the above phenomenon in a quantitative way. To do this, we first introduce the following auxiliary function: for any $n, n' \in \N$ with $n, n' \ge 2$, $\phi_{n; n'}: \N \to \N$ is given by 
$$
\phi_{n; n'}(j):=\left\lfloor \frac{j\log n}{\log n'} \right\rfloor. 
$$
Note that $\phi_{n, n}(j)=j$ for all $j \in \N$ and $n \ge 2$. 

This auxiliary function allows us to extend the concept of ``\emph{far}" considered in \cite{AH} and \cite{AHJOW}, which is the first ingredient that we need for our main result. Such a generalization is two-fold: first of all, we are able to define ``\emph{far number with respect to a finite collection of integers}" (see, Definition \ref {20200910defn01}); second, we also describe the ``\emph{far pair of integer-valued functions with respect to a finite collection of integers}" (see, Definition \ref{20200914defn01}). Now let us turn to some details. 

\begin{defn} \label{20200910defn01}
Let $\calN:=\{n_1, \dots, n_L\}$ be a collection of positive integers where each $n_\ell \ge 2, \ell=1, \dots, L$. Given any $\del \in \R$ and $n, n' \in \N$ with $n, n' \ge 2$, we say $\del$ is  \emph{a $(n, n')$-far number with respect to $\calN$} if there exists $C>0$ such that for any $n_\ell \in \calN$, there holds that
\begin{equation} \label{general far}
  \left| \del-\frac{k_1}{n^{\phi_{n_{\ell}; n}(m)}}-\frac{k_2}{\left(n'\right)^{\phi_{n_{\ell}; n'}(m)}} \right| \ge \frac{C}{n_{\ell}^m}, \quad \forall m \ge 0, k_1, k_2\in \Z,
\end{equation}
where $C$ only depends on $n$, $n'$, $\del$, $\calN$, $L$ and any dimension constants, but is independent of $m$, $k_1$ and $k_2$. 
\end{defn}

\begin{rem}
\begin{enumerate}
    \item [(1).] The concept of far numbers with respect to a set will be used to deal with the \emph{small scale case} in most of our applications later, and we will only consider the case when $n, n' \in \calN$ and $L=d+1$, which is the optimal number of general dyadic systems that needed to guarantee the adjacency in $\R^d$. Therefore, the constant $C$ in \eqref{general far} later will only depend on $\del, \calN$ and any dimensional constants;
    
    \medskip
    
    \item [(2).] When $\calN=\{n_0\}$ and $n=n'=n_0$, Definition \ref{20200910defn01} coincides with the classical definition of $n_0$-far number, which was considered in \cite{AHJOW} and \cite{AH}. The concept of \emph{far numbers}, where $\calN=\{2\}$,  was introduced by Mei \cite{TM} to prove that the one-parameter space BMO($\T$), which is the space of bounded mean oscillation on the torus $\T$, can be written as the intersection of two dyadic product BMO($\T$) spaces, with equivalent norms. In 2013, Li, Pipher and Ward \cite{LPW} generalized Mei's result to multi-parameter case and a vast class of function spaces via a more careful study of far numbers. For a systematic study of far numbers, we refer the interested reader to \cite{AHJOW} for more details. 
\end{enumerate}
\end{rem}

Next, we define the ``far pairs of integer-valued functions with respect to a finite set". 

\begin{defn} \label{20200914defn01}
Let $\calN$ and $L$ be defined as above. Given any integer valued functions $\calL, \calL': \N \to \N$ and $n, n' \in \N$ with $n, n' \ge 2$, we say $(\calL, \calL')$ is \emph{a $(n, n')$-far pair of integer-valued functions with respect to $\calN$} if there exists a $C'>0$ and $J \in \N$  sufficiently large, such that for any $n_\ell \in \calN$, any $j \ge J$ and any $k_3, k_4 \in \Z$, there holds that
\begin{equation} \label{20200914eq03}
\left| \calL\left( \phi_{n_\ell, n}(j) \right)+k_3 n^{\phi_{n_\ell; n}(j)}-\calL'\left( \phi_{n_\ell, n'}(j) \right)-k_4 \left(n' \right)^{\phi_{n_\ell; n'}(j)} \right| \ge C'n_\ell^j,
\end{equation}
where $C'$ only depends on $n, n', \calL, \calL', \calN, L, J$ and any dimensional constants, but independent of $j$, $k_3$ and $k_4$. 
\end{defn}

\begin{rem}
The concept of far pairs of integer-valued functions with respect to a finite set will be used to deal with the \emph{large scale case} in our application later, where again, we will only consider the case when $n, n' \in \calN$ and $L=d+1$. 
\end{rem}

The second concept that we need is the \emph{representation of general dyadic systems}. The setting is as follows.  
\begin{enumerate}
    \item [(1).] $\del \in \R^d$, which can be interpreted as the ``initial point" to build the grid; 
    
    \item [(2).] $n \in \N$ with $n \ge 2$, which is the base of the grid;
    
    \item [(3).] An infinite matrix
 \begin{equation} \label{20200910eq02}
    \Vec{\textbf{a}}:=\left\{\Vec{a}_0, \dots, \Vec{a}_j, \dots \right\},
    \end{equation} 
    where $\Vec{a}_j \in \{0,1,\dots , n-1\}^d, j \ge 1$;
    
    \item [(4).] The \emph{location function} associated to $\Vec{\textbf{a}}$: 
    $$
    \calL_{\bfVa}: \N \longmapsto \Z^d, 
    $$
    which is defined by 
    $$
\calL_{\bfVa}(j):=
\begin{cases}
\sum\limits_{i=0}^{j-1} n^i\Vec{a}_i, \hfill \quad \quad \quad j \ge 1;\\
\\
\Vec{0}, \hfill  \quad \quad \quad j=0. 
\end{cases}
$$
\end{enumerate}

\begin{defn} \label{repsgrids}
Let $\del \in \R^d$, $n \ge 2$ be an integer, $\bfVa$ and $\calL_{\bfVa}$ be defined as above. Let $\calG(n, \del, \calL_{\bfVa})$ be the collection of the following cubes:
\begin{enumerate}
    \item [(1).] For $m \ge 0$, the $m$-th generation of $\calG(\del, \calL_{\bfVa})$ is defined as
\begin{eqnarray*}
&&\calG(n, \del)_m:=\Bigg\{ \left[ (\del)_1+\frac{k_1}{n^m}, (\del)_1+\frac{k_1+1}{n^m} \right) \times \dots \\
&& \quad \quad \quad  \quad \quad \quad \quad \quad  \quad \quad \quad \times \left[ (\del)_d+\frac{k_d}{n^m}, (\del)_d+\frac{k_d+1}{n^m} \right) \bigg | (k_1, \dots, k_d) \in \Z^d \Bigg\}, 
\end{eqnarray*}
where here and in the sequel, we use $(\del)_s$ to denote the $s$-th component of a vector $\del \in \R^d$.

Note that all the positive generations (that is, the collection of cubes with sidelength less or equal to $1$) are uniquely determined by the initial point $\del$, and hence the location function $\calL_{\bfVa}$ does not make any contribution for positive generations;  

\medskip

\item [(2).] For $m<0$, the $m$-th generation is defined as 
\begin{eqnarray*}
&& \calG(n, \del, \calL_{\bfVa})_m:=\Bigg\{ \left[ (\del)_1+\left[ \calL_{\bfVa}(-m) \right]_1+\frac{k_1}{n^m}, (\del)_1+\left[ \calL_{\bfVa}(-m) \right]_1+\frac{k_1+1}{n^m} \right) \times \dots  \nonumber \\
&& \quad \quad  \times \left[ (\del)_d+\left[ \calL_{\bfVa}(-m) \right]_d+\frac{k_d}{n^m}, (\del)_d+\left[ \calL_{\bfVa}(-m) \right]_d+\frac{k_d+1}{n^m} \right)  \bigg | (k_1, \dots, k_d) \in \Z^d \Bigg\}. 
\end{eqnarray*}
\end{enumerate}

To this end, for each $m \in \N$, we denote $b \left[\calG\left(n, \del, \calL_{\bfVa} \right)_m \right]$ as the collection of all the boundaries of the cubes in $\calG\left(n, \del, \calL_{\bfVa} \right)_m$. 
\end{defn}

\begin{rem}
 \begin{enumerate}
     \item [1.] The term $\del+\calL_{\bfVa}(-m)$ in Definition \ref{repsgrids}  can be interpreted as the location of $\del$ after choosing $n$-adic parents (with respect to the $0$-th generation) $m$ times;
     
     \medskip
     
     \item [2.] $\calG(n, \del, \calL_{\bfVa})$ is a $n$-adic grid; on the other hand, for any $n$-adic grid $\calG$, it can be represented as $\calG(n, \del, \calL_{\bfVa})$, for some $\del \in \R^n$ and infinite matrix $\bfVa$ defined in \eqref{20200910eq02} (see, \cite[Proposition 3.2 and Proposition 3.3]{AH}). Moreover, although the representation of a $n$-adic grid in general is not unique, they are essentially the ``same" from the view of adjacency (see, \cite[Theorem 3.14]{AHJOW} for both the real line case and \cite[Corollary 2.5]{AH} for the higher dimensional case).
 \end{enumerate}
\end{rem}

We are ready to state our main result, which generalizes \cite[Theorem 1.5]{AH}. 

\begin{thm} \label{mainresult01}
Let $\calG_i:=\calG(n_i, \del_i, \calL_{\bfVa_i})$, $i=1, \dots, d+1$ be a collection of general dyadic grids, where $d, n_i, \del_i$ and $\bfVa_i$ are defined as above. Let further, $\calN:=\{n_1, \dots, n_{d+1}\}$. Then $\calG_1, \dots, \calG_{d+1}$ are adjacent if and only if the following conditions hold: 
\begin{enumerate}
\item [(1).] For any $\ell_1, \ell_2 \in \{1, \dots, d+1\}$ where $\ell_1 \neq \ell_2$, and $s \in \{1, \dots, d\}$, 
$$
\left(\del_{\ell_1} \right)_s-\left(\del_{\ell_2} \right)_s
$$ 
is a $(n_{\ell_1}, n_{\ell_2})$-far number with respect to $\calN$; 

\medskip

\item [(2).] For any $\ell_3, \ell_4 \in \{1, \dots, d+1\}, \ell_3 \neq \ell_4$, and $s \in \{1, \dots, d\}$, the pairs of integer valued function 
$$
\left(\left[\calL_{\bfVa_{\ell_3}}(\cdot)\right]_s, \left[\calL_{\bfVa_{\ell_4}}(\cdot)\right]_s\right)
$$
is a $(n_{\ell_3}, n_{\ell_4})$-far pair of integer-valued functions with respect to $\calN$. 
\end{enumerate}
\end{thm}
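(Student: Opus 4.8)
The plan is to follow the two–scale scheme used for equal bases in \cite{AH} and \cite{AHJOW}, inserting the auxiliary function $\phi_{n;n'}$ wherever generations of grids with different bases must be matched. For a cube $Q$ write $r=\ell(Q)$ and let $x$ be its centre. The basic reformulation is: $Q$ is comparable to $\calG_i=\calG(n_i,\del_i,\calL_{\bfVa_i})$ with constant $C_0$ if and only if there is a generation $m$ with $n_i^{-m}\in[r,C_0r]$ at which $Q$ meets no cube boundary of $\calG_i$, i.e.\ with $\operatorname{dist}\big((x)_s-(\del_i)_s,\,n_i^{-m}\Z\big)\ge r/2$ for every $s$ when $m\ge 0$, and the same with $(\del_i)_s$ replaced by $(\del_i)_s+[\calL_{\bfVa_i}(-m)]_s$ when $m<0$; call the reverse inequality ``$Q$ is \emph{cut} by $\calG_i$ at generation $m$ in coordinate $s$''. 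Since a cube of sidelength $r\in[1/\overline n,\overline n]$, $\overline n:=\max_i n_i$, is comparable to every $\calG_i$ with an absolute constant, failure of adjacency produces a sequence $Q_\nu$, not comparable with constant $\nu$, which (after passing to a subsequence) lies in the \emph{small scale} ($r_\nu\to 0$, only positive generations relevant, where all offsets vanish) or the \emph{large scale} (comparability constants forcing arbitrarily large cubes, so only negative generations matter and the offsets are the location functions $\calL_{\bfVa_i}$); condition (1) governs the first regime, condition (2) the second. Both regimes use the same mechanism, so I describe the small scale.

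\emph{Necessity.} Arguing contrapositively, assume condition (1) fails for an ordered pair $(\ell_1,\ell_2)$ and a coordinate $s$, and set $\del:=(\del_{\ell_1})_s-(\del_{\ell_2})_s$. Since $\calN$ is finite, there are a fixed $n^\circ\in\calN$, a sequence $\eps_j\downarrow 0$, and integers $m_j\ge 0$, $k_1^{(j)},k_2^{(j)}$ with
\[
\Big|\,\del-\frac{k_1^{(j)}}{n_{\ell_1}^{\phi_{n^\circ;n_{\ell_1}}(m_j)}}-\frac{k_2^{(j)}}{n_{\ell_2}^{\phi_{n^\circ;n_{\ell_2}}(m_j)}}\,\Big|<\frac{\eps_j}{(n^\circ)^{m_j}}.
\]
Writing $\rho_j:=(n^\circ)^{-m_j}$ we have $n_{\ell_1}^{-\phi_{n^\circ;n_{\ell_1}}(m_j)}\simeq\rho_j\simeq n_{\ell_2}^{-\phi_{n^\circ;n_{\ell_2}}(m_j)}$, and the inequality says a boundary point of $\calG_{\ell_1}$ at generation $\phi_{n^\circ;n_{\ell_1}}(m_j)$ lies within $\eps_j\rho_j$ of one of $\calG_{\ell_2}$ at generation $\phi_{n^\circ;n_{\ell_2}}(m_j)$, in coordinate $s$. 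Take $Q_j$ of sidelength $\simeq\eps_j\rho_j$ with $(x)_s$ placed at this near-coincidence: since a boundary point of one generation is a boundary point of every finer one, $Q_j$ is cut by $\calG_{\ell_1}$ and by $\calG_{\ell_2}$ in coordinate $s$ at all generations of sidelength $\gtrsim\rho_j\gg\eps_j\rho_j$, hence is comparable to either of these grids only with constant $\gtrsim\eps_j^{-1}$. For each of the other $d-1$ grids, assign a distinct coordinate $t\ne s$ and put $(x)_t$ within $\eps_j\rho_j$ of a \emph{coarse} boundary lattice of that grid, a generation $T_j$ coarser than that of sidelength $\simeq\eps_j\rho_j$; this cuts $Q_j$ in coordinate $t$ at all generations down to sidelength $\simeq n_i^{T_j}\eps_j\rho_j$, forcing comparability constant $\gtrsim n_i^{T_j}$. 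Choosing $\eps_j$ small and $T_j$ with $\eps_j^{-1},n_i^{T_j}>j$ makes $Q_j$ non-comparable with constant $j$ to every $\calG_i$, so the grids are not adjacent. The failure of condition (2) is handled identically, with the offsets $[\calL_{\bfVa}(\cdot)]_s$ and inequality \eqref{20200914eq03} in place of \eqref{general far}, and large cubes.

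\emph{Sufficiency.} Assume (1) and (2); suppose for contradiction that the $\calG_i$ are not adjacent and take a bad sequence $Q_\nu$ in the small scale, not comparable with constant $\nu$, and use $\calG_1$ as reference grid ($n^\circ:=n_1$). For each of the $N\to\infty$ generations $\tilde m$ of $\calG_1$ with $r_\nu\le n_1^{-\tilde m}\le\min(1,\nu r_\nu)/\overline n$, and each $i$, the generation $\phi_{n_1;n_i}(\tilde m)$ of $\calG_i$ has sidelength in $[\,n_1^{-\tilde m},\,n_i n_1^{-\tilde m})\subseteq[r_\nu,\nu r_\nu]\cap(0,1]$, so $Q_\nu$ (not comparable with constant $\nu$) is cut there, in some coordinate $w_i(\tilde m)$. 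As $\tilde m$ varies, two of the $d+1$ entries $w_i(\tilde m)$ coincide, so the map sending $\tilde m$ to the colliding pair together with the common coordinate is constant on $\ge N/(\binom{d+1}{2}d)$ generations; fix such a pair $(\ell_1,\ell_2)$, the common coordinate $s$, and one such $\tilde m$ avoiding the bottom $O(1)$ of those generations. Combining the two cut relations at generations $\phi_{n_1;n_{\ell_1}}(\tilde m)$ and $\phi_{n_1;n_{\ell_2}}(\tilde m)$ yields $k_1,k_2\in\Z$ with
\[
\Big|\,(\del_{\ell_1})_s-(\del_{\ell_2})_s-\frac{k_1}{n_{\ell_1}^{\phi_{n_1;n_{\ell_1}}(\tilde m)}}-\frac{k_2}{n_{\ell_2}^{\phi_{n_1;n_{\ell_2}}(\tilde m)}}\,\Big|<r_\nu,
\]
while $r_\nu<C\,n_1^{-\tilde m}$ for the far-constant $C$ of (1), because $n_1^{-\tilde m}$ exceeds $r_\nu$ by at least a fixed power of $n_1$ (chosen beforehand to beat $1/C$). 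This contradicts that $(\del_{\ell_1})_s-(\del_{\ell_2})_s$ is $(n_{\ell_1},n_{\ell_2})$-far with respect to $\calN$, via the instance $n_\ell=n_1\in\calN$, $m=\tilde m$ of \eqref{general far}. The large scale is identical, using the location functions and \eqref{20200914eq03}.

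The principal difficulty throughout is the genuine mismatch of generations between grids of different bases: a cube of sidelength $\simeq n_\ell^{-m}$ occupies generation $\phi_{n_\ell;n_i}(m)$ of $\calG_i$ only up to an $O(1)$ error, and $\phi_{n';n''}\circ\phi_{n;n'}$ merely bounds $\phi_{n;n''}$ rather than equalling it, so one cannot assign a single ``relevant generation'' to each grid and must fix one reference base $n^\circ$ and follow how $\phi_{n^\circ;\,\cdot}$ spreads the generations over the coordinates. This is precisely why conditions (1) and (2) must hold for \emph{all} of $\calN$ — the reference base $n^\circ$ can be any element of $\calN$ — and it is where the argument genuinely goes beyond the equal-base case of \cite{AH,AHJOW}. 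The remaining, more routine, work is the boundary bookkeeping (open versus left-closed right-open cubes, and the exact window of generations counting as comparable) and the verification in the necessity part that a single constructed cube can be made bad for all $d+1$ grids at once, which uses the $d$ coordinates — one for the failing pair, one for each of the remaining $d-1$ grids — and in passing re-exhibits $d+1$ as the optimal number.
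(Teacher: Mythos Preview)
Your approach is essentially the paper's: necessity by constructing a bad cube placed, in the failing coordinate $s$, near the two close boundaries of $\calG_{\ell_1}$ and $\calG_{\ell_2}$, and in the remaining $d-1$ coordinates near a coarse boundary of each of the remaining $d-1$ grids; sufficiency by pigeonholing on the coordinate in which the cube is cut by each of the $d+1$ grids at their $\phi_{n_1;n_i}(m_0)$-th generations, so that two grids collide in one coordinate and the corresponding far condition is violated.

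Two remarks. First, your assertion that every cube with $\ell(Q)\in[1/\overline n,\overline n]$ is comparable to \emph{every} $\calG_i$ with an absolute constant is false: a single $n$-adic grid never has this covering property (consider a cube straddling a vertex that persists through all negative generations). The paper does not claim this; instead it handles the ``medium scale'' $-J<m_0\le 0$ by enlarging $Q$ to a cube $Q'$ of sidelength $n_1^J$ and invoking the already-proved large-scale case, at the cost of the harmless extra factor $n_1^J$ in the comparability constant. You should replace your medium-scale claim by this reduction.

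Second, in your sufficiency argument you iterate over $N\to\infty$ reference generations $\tilde m$ and pigeonhole a second time to find a pair $(\ell_1,\ell_2)$ and coordinate $s$ recurring on many of them. This works but is unnecessary: the paper simply fixes the \emph{single} scale $m_0$ determined by $C/n_1^{m_0+1}\le \ell(Q)<C/n_1^{m_0}$, where $C<\min\{C_1,C_1'/2\}$ is already below the far constants, and at that one scale the $(d+1)$-into-$d$ pigeonhole on coordinates directly produces the violation of \eqref{general far} or \eqref{20200914eq03}. Your extra layer does no harm, but it obscures that one scale suffices.
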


\begin{rem}
 \begin{enumerate}
     \item [1.] Theorem \ref{mainresult01} is sharp, in the sense that the number of the general dyadic systems needed to guarantee the adjacency cannot be reduced; 
     
     \medskip
     
     \item [2.] Let us include some motivation for the auxiliary function $\phi$. As we have pointed out earlier, the new feature for the general dyadic systems with different bases is that adjacency is given by cubes with different sidelengths from different grids. For example, in \cite{AH}, the term we have for the second condition in our main result is 
     $$
     \left|\frac{ \left[\calL_{\bfVa_{k_1}}(j)\right]_{s}-\left[\calL_{\bfVa_{k_2}}(j)\right]_{s}}{n^j}\right|.
     $$
     However, if the general dyadic grids are allowed to have different bases, it is no longer correct to compare the location functions at the same ``level" $j$ (that is, $(-j)$-th generation), for instance, consider a term like
     \begin{equation} \label{20200910eq03}
     \left|\frac{ \left[\calL_{\bfVa_{k_1}}(j)\right]_{s}-\left[\calL_{\bfVa_{k_2}}(j)\right]_{s}}{n_{k_1}^j}\right|.
     \end{equation}
     Heuristically, let us assume $n_{k_1} \ge n_{k_2}$, and $\bfVa_{k_1}$ is an infinite matrix such that $$
     \left[\calL_{\bfVa_{k_1}}(j) \right]_s \sim n_{k_1}^j
     $$ 
     when $j$ is large. However, $0 \le \left[\calL_{\bfVa_{k_2}}(j) \right]_s <n_{k_2}^j$, which is negligible compared to the term $\left[\calL_{\bfVa_{k_1}}(j) \right]_s$. In other words, \eqref{20200910eq03} suggests that the adjacency for the large scale only depends on grids with a larger base, which is not correct since we can always do the ``changing base trick" as in \eqref{20200910eq04} to make the base as large as we want. A similar observation suggests that for the \emph{small scale case}, one has to extend the definition of ``far with respect to a number" to ``far with respect to a set" (see, Definition \ref{20200910defn01}). 
     \medskip
     
     These phenomena suggest that when the bases are different, it is more reasonable to explore how the cubes from different grids with comparable size interact with each other, rather than the from the same generations. This motivates us to introduce $\phi$ to quantify such a phenomenon. 
     
     To this end, we make a comment that another possible approach to study the geometry underlying Theorem \ref{mainresult01} is to consider the \emph{fundamental structures of $d+1$ general dyadic dyadic systems with different bases}, which were introduced in \cite{AH} to study the adjacency of general dyadic grids with the same base. It is not hard to see that when the bases are different, these structures make sense if and only if the cubes used to build these structures from different grids are of comparable sizes. 
\end{enumerate}
\end{rem}

The following corollary is straightforward from the main result Theorem \ref{mainresult01} (see \cite{AH} for a more detailed explanation).

\begin{cor} \label{20200915cor01}
Let $\calG_1, \dots, \calG_{d+1}$ be defined as in Theorem \ref{mainresult01}. $\calG_1, \dots, \calG_{d+1}$ is adjacent in $\R^d$ if and only if the projection of any two of them onto any coordinate axis is adjacent in $\R$. 
\end{cor}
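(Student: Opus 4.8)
The plan is to deduce the corollary from Theorem \ref{mainresult01} by noticing that conditions (1) and (2) there are formulated one coordinate at a time. The first step is the elementary observation that the orthogonal projection of $\calG_i=\calG(n_i,\del_i,\calL_{\bfVa_i})$ onto the $s$-th coordinate axis is again a general dyadic grid in $\R$, and in fact equals $\calG\big(n_i,(\del_i)_s,[\calL_{\bfVa_i}(\cdot)]_s\big)$. This is immediate from Definition \ref{repsgrids}: every cube of $\calG_i$ is a product of $d$ half-open intervals, and as the other $d-1$ coordinates run over $\Z^{d-1}$ the $s$-th interval runs precisely over the corresponding generation of $\calG\big(n_i,(\del_i)_s,[\calL_{\bfVa_i}(\cdot)]_s\big)$; moreover $[\calL_{\bfVa_i}(\cdot)]_s$ is itself the location function attached to the scalar digit sequence obtained by taking the $s$-th entry of each $\Vec a_j$, so it is a bona fide $\N\to\N$ function of the type occurring in Definition \ref{20200914defn01}. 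Thus statements about the projections of the $\calG_i$ onto an axis are statements about explicitly represented one-dimensional grids, to which Theorem \ref{mainresult01} applies in ambient dimension $1$.

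The second step is to line up the two sets of conditions. Applying Theorem \ref{mainresult01} with $d=1$ to a single pair $\{\pi_s(\calG_{\ell_1}),\pi_s(\calG_{\ell_2})\}$ (denoting by $\pi_s$ the projection onto the $s$-th axis) shows that this pair is adjacent in $\R$ if and only if $(\del_{\ell_1})_s-(\del_{\ell_2})_s$ is an $(n_{\ell_1},n_{\ell_2})$-far number and $\big([\calL_{\bfVa_{\ell_1}}(\cdot)]_s,[\calL_{\bfVa_{\ell_2}}(\cdot)]_s\big)$ is an $(n_{\ell_1},n_{\ell_2})$-far pair of integer-valued functions, the relevant set of bases being here $\{n_{\ell_1},n_{\ell_2}\}$. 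On the other hand, Theorem \ref{mainresult01} in $\R^d$ says that $\calG_1,\dots,\calG_{d+1}$ is adjacent if and only if these very same far-number and far-pair conditions hold for every pair $\ell_1\neq\ell_2$ and every coordinate $s$, only now with reference set $\calN=\{n_1,\dots,n_{d+1}\}$.

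The main obstacle is therefore to reconcile the reference sets $\{n_{\ell_1},n_{\ell_2}\}$ and $\calN$. One implication is free: far with respect to a larger set is the stronger requirement, so the conditions with respect to $\calN$ force those with respect to each two-element subset, and hence (by the $d=1$ case of Theorem \ref{mainresult01}) $d$-dimensional adjacency implies adjacency in $\R$ of every pair of projections onto every axis. For the converse I would show, via the number-theoretic picture behind Definition \ref{20200910defn01}, that being $(n_{\ell_1},n_{\ell_2})$-far with respect to $\{n_{\ell_1},n_{\ell_2}\}$ already implies being $(n_{\ell_1},n_{\ell_2})$-far with respect to all of $\calN$ (with a worse constant depending only on the bases), and likewise for far pairs: after clearing denominators, $\big\{k_1 n^{-\phi_{n_\ell;n}(m)}+k_2(n')^{-\phi_{n_\ell;n'}(m)}:k_1,k_2\in\Z\big\}$ is a rank-one lattice with spacing $1/\mathrm{lcm}\big(n^{\phi_{n_\ell;n}(m)},(n')^{\phi_{n_\ell;n'}(m)}\big)$, which one compares with $n_\ell^{-m}$; if $n_{\ell_1},n_{\ell_2}$ are not both powers of a common integer this spacing is much smaller than $n_\ell^{-m}$, so no real number is far with respect to either set and the equivalence holds vacuously, whereas if they are powers of a common base $g$ every far condition in sight collapses, up to a bounded factor, to the single classical requirement that $\del$ be $g$-far — since a long run of equal digits in the base-$g$ expansion of $\del$ beginning at any exponent still contains, after a bounded shift, a long run beginning at an exponent of the prescribed arithmetic-progression shape. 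With this equivalence in hand, pairwise adjacency of the projections yields conditions (1)--(2) of Theorem \ref{mainresult01} and hence $d$-dimensional adjacency. (Alternatively one can bypass the hard direction: adjacency is monotone under adjoining grids, so pairwise adjacency of $\pi_s(\calG_1),\dots,\pi_s(\calG_{d+1})$ forces the whole $(d+1)$-tuple to be adjacent in $\R$, from which conditions (1)--(2) with reference set $\calN$ can be read off via the one-dimensional form of the theorem for $d+1$ grids on $\R$, cf. \cite{AHJOW}.) Combining the two implications proves Corollary \ref{20200915cor01}.
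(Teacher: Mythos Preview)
The paper itself gives no proof of this corollary; it simply declares it ``straightforward from the main result Theorem~\ref{mainresult01}'' and points to \cite{AH}. Your write-up therefore goes well beyond what the paper supplies, and the overall architecture---reduce to the coordinate-wise conditions of Theorem~\ref{mainresult01} and match them with the $d=1$ case---is exactly right. You also correctly isolate the only nontrivial point: the $\R^d$ statement uses the reference set $\calN=\{n_1,\dots,n_{d+1}\}$ in Definitions~\ref{20200910defn01} and \ref{20200914defn01}, while applying Theorem~\ref{mainresult01} in $\R$ to a single pair of projections uses only $\{n_{\ell_1},n_{\ell_2}\}$.

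However, your resolution of this mismatch is more elaborate than needed, and the number-theoretic detour is both unnecessary and logically awkward (it anticipates Section~3, and the paper in turn invokes this very corollary when assembling Theorem~\ref{mainresult02}). The clean observation is that the far condition \eqref{general far} for \emph{any} choice of $n_\ell\ge 2$ is equivalent, up to a constant depending only on the bases, to the condition for $n_\ell=n_{\ell_1}$. Indeed, given $m\ge 0$ and setting $a=\phi_{n_\ell;n_{\ell_1}}(m)$, $b=\phi_{n_\ell;n_{\ell_2}}(m)$, one has $n_{\ell_1}^{a}\le n_\ell^m<n_{\ell_1}^{a+1}$ and $\phi_{n_{\ell_1};n_{\ell_2}}(a+1)\ge b$, so that
\[
\tfrac{1}{n_{\ell_1}^{a}}\Z+\tfrac{1}{n_{\ell_2}^{b}}\Z \ \subseteq\ \tfrac{1}{n_{\ell_1}^{a+1}}\Z+\tfrac{1}{n_{\ell_2}^{\phi_{n_{\ell_1};n_{\ell_2}}(a+1)}}\Z,
\]
and hence the distance from $\del$ to the left-hand lattice is at least $C/n_{\ell_1}^{a+1}\ge (C/n_{\ell_1})\cdot n_\ell^{-m}$. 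The same monotonicity argument handles \eqref{20200914eq03}. Thus ``far with respect to $\{n_{\ell_1},n_{\ell_2}\}$'' and ``far with respect to $\calN$'' coincide up to harmless constants, and the two sets of conditions in your second step match on the nose---no case split on commensurability, no appeal to normal numbers, and no circularity. Your alternative route~(b) is less satisfactory as written: it presupposes a version of Theorem~\ref{mainresult01} for $d+1$ grids on $\R$, which the present paper does not prove (the theorem is stated only for $d+1$ grids in $\R^d$), so you would be leaning on \cite{AHJOW} rather than on this paper.
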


A second application of our main result Theorem \ref{mainresult01} is that the typical examples provided by the ``changing bases trick" in Example \ref{20200922exa01} are actually the only possible cases for adjacent general dyadic systems with different bases. More precisely, we have the following result. 

\begin{thm} \label{mainresult02}
Let $\calG_1, \dots, \calG_{d+1}$ be adjacent on $\R^d$. Then there exists an integer $\mathfrak n \ge 2$, and $s_i \in \N, s_i \ge 1, i=1, \dots, d+1$, such that
$$
n_i=\mathfrak n^{s_i}, \quad i=1, \dots, d+1. 
$$
\end{thm}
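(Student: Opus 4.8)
The plan is to reduce the statement to the one-dimensional projection situation via Corollary \ref{20200915cor01} and then extract the multiplicative structure from the far-number condition in Theorem \ref{mainresult01}(1). Suppose $\calG_1, \dots, \calG_{d+1}$ are adjacent; by Corollary \ref{20200915cor01} the projections onto, say, the first coordinate axis are adjacent in $\R$, so by Theorem \ref{mainresult01}(1) the differences $\del_{\ell_1,1} - \del_{\ell_2,1}$ are $(n_{\ell_1}, n_{\ell_2})$-far with respect to $\calN$ for every pair $\ell_1 \neq \ell_2$. The core claim is then purely arithmetic: if $n, n' \ge 2$ and there exists a real $\del$ that is $(n, n')$-far with respect to a set $\calN$ containing both $n$ and $n'$, then $\log n / \log n' \in \Q$; equivalently, $n$ and $n'$ are both powers of a common integer $\mathfrak n$. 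Granting this pairwise conclusion, one finishes by a standard fact about integers: if $n_i$ and $n_j$ are both powers of a common base for every pair $i,j$, then there is a single $\mathfrak n$ with $n_i = \mathfrak n^{s_i}$ for all $i$ — indeed, take $\mathfrak n$ to be the integer whose prime factorization uses, for each prime $p$, the gcd over $i$ of the $p$-adic valuations of the $n_i$; the pairwise condition forces each $n_i$'s exponent vector to be a scalar multiple of a fixed primitive vector, and $\mathfrak n$ is the generator.

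The heart of the argument is therefore the arithmetic claim, which I would prove by contraposition: assume $\theta := \log n / \log n'$ is irrational and show that no $\del$ can be $(n, n')$-far with respect to $\calN \ni n, n'$. Taking $\ell$ with $n_\ell = n$ in \eqref{general far}, we have $\phi_{n;n}(m) = m$ and $\phi_{n;n'}(m) = \lfloor m\theta \rfloor$, so the condition reads
\begin{equation} \label{arith-eq}
\left| \del - \frac{k_1}{n^m} - \frac{k_2}{(n')^{\lfloor m\theta \rfloor}} \right| \ge \frac{C}{n^m} \quad \text{for all } m \ge 0, \ k_1, k_2 \in \Z.
\end{equation}
Now choose $k_1$ to be the nearest integer to $n^m \del$, so $|\del - k_1/n^m| \le \tfrac12 n^{-m}$; to violate \eqref{arith-eq} it then suffices to find $k_2 \in \Z$ with $|k_2| \le$ (something polynomial in $n^m$) such that $k_2/(n')^{\lfloor m\theta\rfloor}$ approximates the residual $\del - k_1/n^m$ to within, say, $\tfrac{C}{2} n^{-m}$. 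Since $(n')^{\lfloor m\theta \rfloor} \approx (n')^{m\theta} = n^m$, the spacing of the lattice $\tfrac{1}{(n')^{\lfloor m\theta\rfloor}}\Z$ is comparable to $n^{-m}$, but not small enough by itself. The trick is to exploit the irrationality of $\theta$: by equidistribution of $\{m\theta\}$ (or just density), there are infinitely many $m$ for which $\{m\theta\}$ is close to $1$, i.e. $(n')^{\lfloor m\theta\rfloor}$ is much smaller than $n^m$ — no good — whereas for $m$ with $\{m\theta\}$ close to $0$, $(n')^{\lfloor m\theta\rfloor} \approx n^m$. Instead one should play two scales against each other: using \emph{two} values $m < m'$ and the identity relating $n^m$-grids to $n^{m'}$-grids is not available across bases, so the cleaner route is a pigeonhole/three-distance argument on the two-dimensional orbit $(n^m \del \bmod 1, (n')^{\lfloor m\theta\rfloor}\del \bmod 1)$ combined with the density of $\{m\theta\}$, producing $m$ and integers $k_1, k_2$ of controlled size making the left side of \eqref{arith-eq} smaller than any prescribed $C n^{-m}$.

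I expect the main obstacle to be exactly this last Diophantine step: showing that irrationality of $\log n/\log n'$ genuinely destroys the far-number inequality for \emph{every} candidate $\del$, uniformly, rather than just for generic $\del$. The subtlety is that \eqref{arith-eq} must fail for a single well-chosen scale $m$ with a good constant, so one needs quantitative control — a simultaneous Dirichlet-type approximation of $\del$ by both $n^{-m}\Z$ and $(n')^{-\lfloor m\theta\rfloor}\Z$ with denominators of comparable magnitude — and the mismatch $(n')^{\lfloor m\theta\rfloor}$ versus $n^m$ must be absorbed. Once one observes that $n^m / (n')^{\lfloor m\theta\rfloor} = (n')^{\{m\theta\}}$ ranges over a set whose logarithm is $\{m\theta\}\log n'$, dense in $[0,\log n']$, a compactness/pigeonhole argument over a long window of consecutive $m$ yields the needed collision; the rest is bookkeeping. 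The large-scale condition (2) of Theorem \ref{mainresult01} is not needed for this direction, since the obstruction already appears at small scales, which matches the paper's heuristic that the changing-bases phenomenon is detected scale-by-scale.
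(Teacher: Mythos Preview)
Your high-level plan is correct and matches the paper: reduce to $d=1$ via Corollary \ref{20200915cor01}, apply condition (1) of Theorem \ref{mainresult01}, and show that the existence of an $(n,n')$-far number forces $\log n/\log n' \in \Q$. The gap is in the Diophantine core, which you yourself flag. Your strategy of first choosing $k_1$ nearest to $n^m\del$ and then trying to approximate the residual by $k_2/(n')^{\lfloor m\theta\rfloor}$ cannot work: the spacing of the lattice $(n')^{-\lfloor m\theta\rfloor}\Z$ is $(n')^{-\lfloor m\theta\rfloor}=n^{-m}(n')^{\{m\theta\}}\in[n^{-m},\,n' n^{-m})$, so separately the two approximations only get you within a constant multiple of $n^{-m}$, never beating a fixed $C$. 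Density or equidistribution of $\{m\theta\}$ merely moves that constant inside the fixed window $[1,n']$ and does not help; and the vague ``pigeonhole over a long window'' remark does not point to a mechanism that produces a collision of the required precision.

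What you are missing is that $k_1$ and $k_2$ must be used \emph{jointly} via B\'ezout: the set $\Bigl\{\dfrac{k_1}{n^m}+\dfrac{k_2}{(n')^{\lfloor m\theta\rfloor}}:k_1,k_2\in\Z\Bigr\}$ equals $\dfrac{\gcd\bigl(n^m,(n')^{\lfloor m\theta\rfloor}\bigr)}{n^m (n')^{\lfloor m\theta\rfloor}}\,\Z$, so the real question is whether $\gcd\bigl(n^m,(n')^{\lfloor m\theta\rfloor}\bigr)\big/(n')^{\lfloor m\theta\rfloor}$ can be made arbitrarily small. This is exactly what the paper proves: writing $n=\prod p_\ell^{a_\ell}$ and $n'=\prod p_\ell^{a_\ell'}$, irrationality of $\theta$ forces, for some prime $p_\ell$, the difference $a_\ell\lfloor m\theta\rfloor - a_\ell' m$ (or its negative) to be nonnegative and unbounded along a subsequence of $m$, which drives the gcd ratio to $0$ and contradicts the far-number lower bound. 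The irrationality of $\theta$ enters through this prime-factorization/valuation argument, not through equidistribution of fractional parts.
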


\begin{rem}
 Note that if such an $\mathfrak n$ does not exist, then this means that $\frac{\log n_i}{\log n_j}$ is irrational for some $i \neq j$, whereas if $\mathfrak n$ does exist, then  $\frac{\log n_i}{\log n_j}$ is always rational.  This is related to numbers normal to different bases, that is, $\frac{\log n_1}{\log n_2}$ is rational if and only if every number that is normal to base $n_1$ is also normal to base $n_2$ \cite{Schmidt}.  Also, via work of Wu \cite{Wu}, $\frac{\log n_1}{\log n_2}$ being rational means that the null sets for the $n_1$-adic doubling measures and $n_2$-adic doubling measures are equal.  This in turn loosely relates to our recent work \cite{AH2}.
\end{rem}
The structure of the paper is as follows: Section 2 is devoted to prove the main result Theorem \ref{mainresult01}. Moreover, we also show Theorem \ref{mainresult01} is independent of the representation. While in Section 3, we prove Theorem \ref{mainresult02} and connect the discoveries therein with other recent work. 

\bigskip
\section{Proof of Theorem \ref{mainresult01}}

In this section, we prove our main result Theorem \ref{mainresult01}.

\subsection{Necessity.} Suppose $\calG_1=\calG(n_1, \del_1, \calL_{\bfVa_1}), \dots, \calG_{d+1}=\calG(n_{d+1}, \del_{d+1}, \calL_{\bfVa_{d+1}})$ are adjacent. We prove the result by contradiction. 

\medskip

Assume condition (1) fails. This means we can take some $\ell_1, \ell_2 \in \{1, \dots, d+1\}$ with $\ell_1 \neq \ell_2$ and $s \in \{1, \dots, d\}$, such that for each $N_1 \ge 1$, there exists some $m \ge 0$, $K_1, K_2 \in \Z$ and $n_\ell \in \calN$, such that
\begin{equation} \label{20200910eq11}
\left| \left(\del_{\ell_1} \right)_s-\left(\del_{\ell_2} \right)_s- \frac{K_1}{n_{\ell_1}^{\phi_{n_\ell; n_{\ell_1}}(m)}}-\frac{K_2}{n_{\ell_2}^{\phi_{n_\ell; n_{\ell_2}}(m)}}\right|< \frac{1}{N_1 n_{\ell}^{m}}. 
\end{equation}
This implies the distance between the hyperplane $\left\{(x)_s=(\del_{\ell_1})_s-\frac{K_1}{n_{\ell_1}^{\phi_{n_\ell; n_{\ell_1}}(m)}}\right\}$ and the hyperplane $\left\{(x)_s=(\del_{\ell_2})_s+\frac{K_2}{n_{\ell_2}^{\phi_{n_\ell; n_{\ell_2}}(m)}}\right\}$ is less than $\frac{1}{N_1 n_{\ell}^{m}}$. 

Observe that
$$
\left\{(x)_s=(\del_{\ell_1})_s-\frac{K_1}{n_{\ell_1}^{\phi_{n_\ell; n_{\ell_1}}(m)}}\right\} \subset b\left[ \left(\calG_{\ell_1} \right)_{\phi_{n_\ell; n_{\ell_1}}(m)} \right]
$$
and 
$$
\left\{(x)_s=(\del_{\ell_2})_s+\frac{K_2}{n_{\ell_2}^{\phi_{n_\ell; n_{\ell_2}}(m)}}\right\} \subset b\left[ \left(\calG_{\ell_2}\right)_{\phi_{n_\ell; n_{\ell_2}}(m)} \right]. 
$$
Therefore, the estimate \eqref{20200910eq11} suggests that we can take two sufficiently close points, with the first one located on $b\left[ \left(\calG_{\ell_1} \right)_{\phi_{n_\ell; n_{\ell_1}}(m)} \right]$, and the second one on  $b\left[ \left(\calG_{\ell_2}\right)_{\phi_{n_\ell; n_{\ell_2}}(m)} \right]$, respectively. More precisely, we may assume $s=\ell=\ell_1=1$ and $\ell_2=2$ for simplicity. We consider the points $p_1$ and $p_2$, which are given by intersection of $d$ non-parallel hyperplanes as follows:
$$
p_1:=\left\{(x)_1=(\del_1)_1-\frac{K_1}{n_1^{m}}\right\} \cap \{(x)_2=(\del_3)_2 \} \cap \cdots \cap \{(x)_d=(\del_{d+1})_d \}
$$
and
$$
p_2:=\left\{(x)_1=(\del_2)_1+\frac{K_2}{n_2^{\phi_{n_1; n_2}(m)}}\right\} \cap \{(x)_2=(\del_3)_2 \} \cap \cdots \cap \{(x)_d=(\del_{d+1})_d \} \}.
$$
Note that $p_1$ and $p_2$ enjoy the following properties:
\begin{enumerate}
    \item [(a).] $p_1 \in b\left[ \left(\calG_1 \right)_m \right] \cap \left(\bigcap\limits_{t=3}^{d+1} b\left[ \left(\calG_t\right)_0 \right]\right)$ and  $p_2 \in b\left[\left(\calG_2 \right)_{\phi_{n_1; n_2}(m)} \right] \cap \bigcap\limits_{t=3}^{d+1} b\left[ \left(\calG_t \right)_0 \right]$;
    
    \medskip
    
    \item [(b).] $\textrm{dist}(p_1, p_2)< \frac{1}{N_1 n_1^{m}}$. 
\end{enumerate}

Note that the second property above allows us to choose an open cube $Q$ of sidelength $\frac{1}{N_1 n_1^{m}}$ containing both $p_1$ and $p_2$ as interior points; while the first property asserts that if there is a dyadic cube $D \in \calG_{\ell}, \ell \in \{1, \dots, d+1\}$ covering $Q$, then the sidelength of $D$ is at least $\frac{1}{2n_1^m}$. Indeed, if $D \in \calG_1$, then since $D \cap b\left[ \left(\calG_1 \right)_m \right] \neq \emptyset$, $D$ has to belong to $\left(\calG_1 \right)_{m'}$ for some $m'>m$, this suggests $\ell(D)>\frac{1}{n_1^m}$; if $D \in \calG_2$, then similarly we have
\begin{eqnarray} \label{20200915eq01}
\ell(D) %
&>& \frac{1}{n_2^{{\phi_{n_1; n_2}(m)}}} = \frac{1}{ e^{\log n_2 \cdot \left\lfloor \frac{m\log n_1}{\log n_2} \right\rfloor}} \nonumber \\ 
& \ge & \frac{1}{e^{\log n_2 \cdot \frac{m \log n_1}{\log n_2}}}= \frac{1}{n_1^m}. 
\end{eqnarray}
Finally, if $D \in \calG_i$, $i \in \{3, \dots d+1\}$, a similar argument suggests that $\ell(D) \ge 1$. Hence
$$
\ell(D)>\frac{N_1}{2} \cdot \ell(Q). 
$$
This will contradict adjacency if we choose $N_1$ sufficiently large. 

\medskip

Next, we assume condition (2) fails.  This means that there is some $\ell_3, \ell_4 \in \{1, \dots, d+1\}$ with $\ell_3 \neq \ell_4$ and $s \in \{1, \dots, d\}$, such that for each $N_2 \ge 1$, there exists some $n_\ell \in \calN$, $j$ sufficiently large and $K_3, K_4 \in \Z$, such that
\begin{equation} \label{20200911eq02}
\left| \left[\calL_{\bfVa_{\ell_3}}\left( \phi_{n_\ell; n_{\ell_3}}(j) \right) \right]_s+K_3 n_{\ell_3}^{\phi_{n_\ell; n_{\ell_3}}(j)}- \left[\calL_{\bfVa_{\ell_4}}\left( \phi_{n_\ell; n_{\ell_4}}(j) \right) \right]_s-K_4 n_{\ell_4}^{\phi_{n_\ell; n_{\ell_4}}(j)}\right|<\frac{n_\ell^j}{N_2}. 
\end{equation}
Without loss of generality, we consider the case when $s=\ell=\ell_3=1$ and $\ell_4=2$. Therefore, \eqref{20200911eq02} can be simplified as 
\begin{equation} \label{20200914eq04}
\left| \left[\calL_{\bfVa_1}(j) \right]_1+K_3n_1^j- \left[\calL_{\bfVa_2} \left( \phi_{n_1; n_2}(j) \right) \right]_1-K_4n_2^{\phi_{n_1; n_2}(j)} \right|<\frac{n_1^j}{N_2}. 
\end{equation} 
This gives
$$
\left| \left[\del_1+\calL_{\bfVa_1}(j) \right]_1+K_3n_1^j- \left[\del_2+\calL_{\bfVa_2} \left( \phi_{n_1; n_2}(j) \right) \right]_1-K_4n_2^{\phi_{n_1; n_2}(j)} \right|<\frac{2n_1^j}{N_2},
$$
since we can pick $j$ is sufficiently large. To this end, let us rewrite the above estimate as follows
\begin{equation} \label{20200914eq05}
\left| \left[\del_1+\calL_{\bfVa_1}(j)+K_3n_1^j \vec{e}_1 \right]_1- \left[\del_2+\calL_{\bfVa_2} \left( \phi_{n_1; n_2}(j) \right) +K_4n_2^{\phi_{n_1; n_2}(j)} \vec{e}_1 \right]_1 \right|<\frac{2n_1^j}{N_2},
\end{equation} 
where for $s \in \{1, \dots, d\}$, $\vec{e}_s$ is the standard unit vector in $\R^d$ with $s$-th entry being $1$.

Similarly as \eqref{20200910eq11}, the estimate \eqref{20200914eq05} can also be interpreted geometrically. Indeed, let
\begin{eqnarray*}
q_1%
&:=& \left\{(x)_1=\left[\del_1+\calL_{\bfVa_1}(j)+K_3n_1^j \vec{e}_1 \right]_1\right\} \cap \\
&& \quad \quad \quad \quad  \quad \quad \bigcap_{t=2}^d \left\{(x)_t=\left[\del_{t+1}+\calL_{\bfVa_{t+1}} \left( \phi_{n_1; n_{t+1}}(j) \right) \right]_t \right\}  
\end{eqnarray*}
and 
\begin{eqnarray*}
q_2%
&:=& \left\{(x)_1=  \left[\del_2+\calL_{\bfVa_2} \left( \phi_{n_1; n_2}(j) \right) +K_4n_2^{\phi_{n_1; n_2}(j)} \vec{e}_1 \right]_1 \right\} \cap \\
&& \quad \quad \quad \quad  \quad \quad \bigcap_{t=2}^d \left\{(x)_t=\left[\del_{t+1}+\calL_{\bfVa_{t+1}} \left( \phi_{n_1; n_{t+1}}(j) \right) \right]_t \right\}.  
\end{eqnarray*}
which satisfies the following properties:
\begin{enumerate}
    \item [(a).] $q_1 \in b\left[ \left(\calG_1 \right)_{-j} \right] \cap \left(\bigcap\limits_{k=3}^{d+1} b\left[ \left(\calG_k\right)_{-\phi_{n_1; n_k}(j)} \right] \right)$ and $q_2 \in \bigcap\limits_{k=2}^{d+1} b\left[ \left(\calG_k\right)_{-\phi_{n_1; n_k}(j)} \right]$;
    
    \medskip
    \item [(b).] $\textrm{dist}(q_1, q_2)<\frac{2n_1^j}{N_2}$.
\end{enumerate}
Note that the reason for us to consider the negative generations in property (a) above is that the cubes underlying the condition (2) is of side length greater than $1$ (namely, the \emph{large scale case}). 

To derive the desired contradiction, we again start from the second condition above by taking a cube $Q$ containing both $q_1$ and $q_2$, with $\ell(Q)=\frac{2n_1^j}{N_2}$. Now we let $D \in \calG_i$ for some $i \in \{1, \dots, d+1\}$ which contains $Q$. However, since 
$$
Q \cap b\left[ \left(\calG_i\right)_{-\phi_{n_1; n_i}(j_1)} \right] \neq \emptyset, 
$$
(note that $\phi_{n; n}(j)=j$ for $j$ large), it follows that 
\begin{eqnarray} \label{20200911eq11}
\ell(D)%
&>& n_i^{\phi_{n_1; n_i}(j_1)}=\exp \left( \log n_i \cdot \left\lfloor \frac{j_1\log n_1}{\log n_i} \right\rfloor \right) \nonumber \\
&\ge& \exp \left( \log n_i\cdot \left(\frac{j_1 \log n_1}{\log n_i}-1 \right) \right) = \frac{n_1^{j_1}}{n_i}. 
\end{eqnarray}
Therefore, we have 
$$
\ell(D)> \frac{N_2\ell(Q)}{2n_i},
$$
which is again a contradiction if we choose $N_2$ sufficiently large. 

\medskip

The proof for the necessity is complete.

\medskip

\subsection{Sufficiency.}

Suppose conditions (1) and (2) hold, that is,
\begin{enumerate}
    \item [(1).] For any $\ell_1, \ell_2 \in \{1, \dots, d+1\}$ where $\ell_1 \neq \ell_2$, and $s \in \{1, \dots, d\}$,  there exists some constant $C(\ell_1, \ell_2, s)>0$, such that for any $m \ge 0$, any $n_\ell \in \calN$ and $k_1, k_2 \in \Z$, there holds
\begin{equation} \label{20200911eq04}
\left| \left(\del_{\ell_1} \right)_s-\left(\del_{\ell_2} \right)_s- \frac{k_1}{n_{\ell_1}^{\phi_{n_\ell; n_{\ell_1}}(m)}}-\frac{k_2}{n_{\ell_2}^{\phi_{n_\ell; n_{\ell_2}}(m)}}\right| \ge \frac{C(\ell_1, \ell_2, s)}{n_{\ell}^m}.
\end{equation}

\medskip

\item [(2).] For any $\ell_3, \ell_4 \in \{1, \dots, d+1\}, \ell_3 \neq \ell_4$, and $s \in \{1, \dots, d\}$, there exists a $C'(\ell_3, \ell_4, s)>0$ and $J(\ell_3, \ell_4, s) \in \N$ sufficiently large, such that for any $n_\ell \in \calN$, $j>J(\ell_3, \ell_4, s)$ and any $k_3, k_4 \in \Z$, there holds that
\begin{eqnarray} \label{20200911eq06} 
&&\bigg| \left[ \calL_{\bfVa_{\ell_3}} \left( \phi_{n_\ell, n_{\ell_3}}(j) \right) \right]_s+k_3n_{\ell_3}^{ \phi_{n_\ell, n_{\ell_3}}(j)} \nonumber \\
&& \quad \quad \quad \quad \quad \quad \quad  -\left[ \calL_{\bfVa_{\ell_4}} \left( \phi_{n_\ell, n_{\ell_4}}(j) \right) \right]_s-k_4n_{\ell_4}^{\phi_{n_\ell, n_{\ell_4}}(j)}  \bigg| \ge C'(\ell_3, \ell_4, s) \cdot n_\ell^j. 
\end{eqnarray}
\end{enumerate}
Denote 
$$
C_1:=\min_{\substack{1 \le \ell_1 \neq  \ell_2 \le d+1 \\ 1 \le s \le d}} C(\ell_1, \ell_2, s),  \quad
C'_1:=\min_{\substack{1 \le \ell_3 \neq \ell_4 \le d+1 \\ 1 \le s \le d}} C'(\ell_3,\ell_4, s). 
$$
and
\begin{equation} \label{20200915eq05}
J:=\max_{\substack{1 \le \ell_3 \neq \ell_4 \le d+1 \\ 1 \le s \le d}} J(\ell_3, \ell_4, s). 
\end{equation}

It is clear that $C_1, C_1'>0$.

\medskip

Recall the goal is to show the collection $\calG_1, \dots, \calG_{d+1}$ is adjacent on $\R^d$. Take some $C>0$ be a constant such that
$$
0<C<\min \left\{  C_1, \frac{C_1'}{2} \right\}.
$$
Let $Q$ be any cube in $\R^d$ and let $m_0 \in \Z$, such that
\begin{equation} \label{20200911eq10}
\frac{C}{n_1^{m_0+1}} \le \ell(Q)<\frac{C}{n_1^{m_0}}. 
\end{equation} 
Let us consider several cases. 

\medskip

\textit{Case I: $m_0>0$.} We have the following claim: there exists some $\ell \in \{1, \dots, d+1\}$, such that
$$
Q \in \left( \calG_\ell \right)_{\phi_{n_1; n_\ell}(m_0)}. 
$$

\medskip

\textit{Proof of the claim:} We prove the claim by contradiction. If $Q$ is not contained in any cubes from $\left(\calG_\ell \right)_{\phi_{n_1; n_\ell}(m_0)}$ for any $\ell \in \{1, \dots, d+1\}$, then for each $\ell \in \{1, \dots, d+1\}$, we can find a $s_\ell \in \{1, \dots, d\}$, such that
$$
P_{s_\ell}(Q) \cap P_{s_\ell} \left( \calV_{\ell, \phi_{n_1; n_\ell}(m_0)} \right) \neq \emptyset, 
$$
where $P_s$ is the orthogonal projection onto the $s$-th coordinate axis in $\R^d$ for $s \in \{1, \dots, d\}$, and for $\ell \in \{1, \dots, d+1\}$ and $m>0$, 
$$
\calV_{\ell, m}:=\left\{ \del_\ell+\frac{\vec{v}}{n_\ell^m}: \vec{v} \in \Z^d \right\}
$$
is the collection of all the vertices of the cubes in $\left(\calG_\ell \right)_m$. By pigeonholing, there exists some $\ell_1, \ell_2 \in \{1, \dots, d+1\}$ with $\ell_1 \neq \ell_2$, but $s_*:=s_{\ell_1}=s_{\ell_2}$, such that
$$
P_{s_*}(Q) \cap P_{s_*} \left(\calV_{\ell_1, \phi_{n_1; n_{\ell_1}(m_0)}} \right), \quad P_{s_*}(Q) \cap P_{s_*} \left(\calV_{\ell_2, \phi_{n_1; n_{\ell_2}(m_0)}} \right) \neq \emptyset,
$$
which implies that there exists some $K_1, K_2 \in \Z$, such that
$$
\left| \left(\del_{\ell_1}\right)_{s_*}+\frac{K_1}{n_{\ell_1}^{\phi_{n_1; n_{\ell_1}}(m_0)}} -\left(\del_{\ell_2} \right)_{s_*}-\frac{K_2}{n_{\ell_2}^{\phi_{n_1; n_{\ell_2}}(m_0)}} \right| \le \ell(Q)<\frac{C}{n_1^{m_0}}< \frac{C(\ell_1, \ell_2, s_*)}{n_1^{m_0}}, 
$$
which contradicts \eqref{20200911eq04}.

\medskip

\textit{Case II: $m_0 \le -J$, where $J$ is defined in \eqref{20200915eq05}.} In this case, our goal is to show that
$$
Q \in \left(\calG_k \right)_{-\phi_{n_1; n_\ell}(-m_0)}
$$
for some $\ell \in \{1, \dots, d+1\}$. We prove it by contradiction again. Following the argument in Case I above, we see that there exists some $\ell_3, \ell_4 \in \{1, \dots, d+1\}$ with $\ell_3 \neq \ell_4$ and $s^* \in \{1, \dots, d\}$, such that 
$$
P_{s^*} (Q) \cap P_{s^*} (\calV_{\ell_3, -\phi_{n_1; n_{\ell_3}}(-m_0)}), \quad P_{s^*} (Q) \cap P_{s^*} (\calV_{\ell_4, -\phi_{n_1; n_{\ell_4}}(-m_0)}) \neq \emptyset.
$$

This implies there exists some $K_3, K_4 \in \Z$, such that
\begin{eqnarray*}
&& \bigg| \left[\del_{\ell_3}+\calL_{\bfVa_{\ell_3}}\left(-\phi_{n_1; n_{\ell_3}}(-m_0) \right) \right]_{s^*}+K_3n_{\ell_3}^{\phi_{n_1; n_{\ell_3}}(-m_0)}- \\
&& \quad \quad \quad \quad \quad  \quad \quad \quad \left[\del_{\ell_4}+\calL_{\bfVa_{\ell_4}}\left(-\phi_{n_1; n_{\ell_3}}(-m_0) \right) \right]_{s^*}-K_4n_{\ell_4}^{\phi_{n_1; n_{\ell_4}}(-m_0)} \bigg|<\frac{C}{n_1^{m_0}}. 
\end{eqnarray*}
Note that since we can always choose $J$ sufficiently large, we can indeed reduce the above estimate to
\begin{eqnarray*}
&& \bigg| \left[\calL_{\bfVa_{\ell_3}}\left(-\phi_{n_1; n_{\ell_3}}(-m_0) \right) \right]_{s^*}+K_3n_{\ell_3}^{\phi_{n_1; n_{\ell_3}}(-m_0)}- \\
&& \quad \quad \quad \quad \quad  \quad \quad \quad \left[\calL_{\bfVa_{\ell_4}}\left(-\phi_{n_1; n_{\ell_3}}(-m_0) \right) \right]_{s^*}-K_4n_{\ell_4}^{\phi_{n_1; n_{\ell_4}}(-m_0)} \bigg|<\frac{2C}{n_1^{m_0}}. 
\end{eqnarray*}
This gives the desired contradiction to \eqref{20200911eq06}, since $C<\frac{C_1'}{2} \le \frac{C_1'(\ell_3, \ell_4, s^*)}{2}$.

\medskip

\textit{Case III: $-J<m_0 \le 0$.} Indeed, we can ``pass" the third case to the second case, by taking a cube $Q'$ containing $Q$ with the sidelength is $n_1^J$. Applying the second case to $Q'$, we find that there exists some $D \in \calG_k$ for some $k \in \{1, \dots, d+1\}$, such that $Q' \subset D$ and $\ell(D) \le C_4 \ell(Q')$, which clearly implies 
\begin{enumerate}
    \item [(1).] $Q \subset D$;
    \item [(2).] $\ell(D) \le C_4 n_1^J \ell(Q)$. 
\end{enumerate}

\medskip

The proof is complete. \hfill $\square$

\bigskip

Finally, we make a remark that Theorem \ref{mainresult01} is \emph{independent} of the choice of the representation. Note that by Corollary \ref{20200915cor01}, it suffices to consider the case when $d=1$. Let $\calG_1:=\calG\left(n_1, \del_1, \calL_{\textbf{a}} \right)$ and $\calG_2:=\calG \left(n_2, \del_2, \calL_{\textbf{b}} \right)$, where $\textbf{a}=(a_1, \dots, a_i, \dots)$ is an infinite sequence of integers where $a_i \in \{0, \dots, n_1-1\}$ and $\textbf{b}$ can be defined similarly with $n_1$ being replaced by $n_2$. Then Theorem \ref{mainresult01} asserts that $\calG_1$ and $\calG_2$ are adjacent on $\R$ if and only if 
\begin{enumerate}
    \item [(1).] there exists some $C>0$, such that for any $m \ge 0$ and $k_1, k_2 \in \Z$, there holds
 \begin{equation} \label{20200922eq01}
    \left| \del_1-\del_2-\frac{k_1}{n_1^{\phi_{n_\ell; n_1}(m)}}-\frac{k_2}{n_2^{\phi_{n_\ell; n_2}(m)}} \right| \ge \frac{C}{n_\ell^m}, \quad \ell=1, 2;
    \end{equation} 
    
    \medskip
    
    \item [(2).] there exists some $C'>0$ and $J \in \N$ sufficiently large, such that for any $j>J$ and any $k_3,  k_4 \in \Z$, there holds
    $$
    \left| \calL_{\textbf{a}}\left(\phi_{n_\ell; n_1}(j) \right)+k_3n_1^{\phi_{n_\ell; n_1}(j)}-\calL_{\textbf{b}}\left(\phi_{n_\ell; n_2}(j) \right)-k_4n_2^{\phi_{n_\ell; n_2}(j)} \right| \ge C'n_\ell^j, \quad  \ell=1, 2,
    $$
    in other words, 
    \begin{eqnarray} \label{20200918eq01}
    && \big| \del_1+\calL_{\textbf{a}}\left(\phi_{n_\ell; n_1}(j) \right)+k_3n_1^{\phi_{n_\ell; n_1}(j)} \nonumber \\
    && \quad \quad \quad \quad \quad \quad \quad -\del_2-\calL_{\textbf{b}}\left(\phi_{n_\ell; n_2}(j) \right)-k_4n_2^{\phi_{n_\ell; n_2}(j)} \big| \ge \frac{C'n_\ell^j}{2}, \quad  \ell=1, 2.
    \end{eqnarray}
\end{enumerate}

The goal now is to show that the constants $C$ and $C'$ defined in above, respectively, are independent of the choice of the representation. Let $\calG(n_1, \del_1', \calL_{\textbf{a}'})$ and $\calG(n_2, \del_2', \calL_{\textbf{b}'})$ be some other representations of $\calG_1$ and $\calG_2$, respectively. Note the following two facts:
$$
\del_1'=\del_1+N_1, \quad \del_2'=\del_2+N_2 
$$ 
for some $N_1, N_2 \in \N$, and 
$$
\calL_{\textbf{a}'}(j)=\calL_{\textbf{a}}(j)-N_1+d_1(j)n_1^j, \quad 
\calL_{\textbf{b}'}(j)=\calL_{\textbf{b}}(j)-N_2+d_2(j)n_2^j, 
$$
where $d_1(\cdot), d_2(\cdot): \N \to \Z$; the result follows from plugging these new quantities appropriately into the above quantities \eqref{20200918eq01}, and we leave the details to the interested reader.

\bigskip

\section{Proof of Theorem \ref{mainresult02}}

The goal of this section is to prove Theorem \ref{mainresult02}, which states that if $\calG_1, \dots, \calG_{d+1}$ are adjacent on $\R^d$, then their bases are closely related. 

\begin{defn}
Let $\{n_1, \dots, n_{d+1}\}$ be a collection of integers with $n_i \ge 2, i=1, \dots, d+1$. We call such a collection \emph{fine} if there exists an adjacent collection of general dyadic systems  $\{\calG_1, \dots, \calG_{d+1}\}$ on $\R^d$, where $\calG_i=\calG(n_i, \del_i, \calL_{\bfVa_i}), i=1, \dots, d+1$.
\end{defn}

Note that Theorem \ref{mainresult02} then asserts that if $\{n_1, \dots, n_{d+1}\}$ is fine, then there exists an integer $\mathfrak n \ge 2$, and $s_i \in \N, s_i \ge 1, i=1, \dots, d+1$, such that
$$
n_i=\mathfrak n^{s_i}, \quad i=1, \dots, d+1. 
$$

We begin with the case when $d=1$. 

\begin{prop} \label{20200922prop01}
If $\{n_1, n_2\}$ is fine, then there exists some $\mathfrak n \in \N$, $\mathfrak n \ge 2$, such that 
$$
n_i=\mathfrak n^{s_i}, \quad i=1, 2,
$$
for some integers $s_1, s_2 \ge 1$. 
\end{prop}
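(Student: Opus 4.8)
The plan is to extract from the adjacency of $\{\calG_1,\calG_2\}$ the fact that $\log n_1/\log n_2$ must be rational, and then to deduce the common-base conclusion by a standard multiplicative number-theoretic argument. The key observation is condition (1) of Theorem \ref{mainresult01}: if $\{n_1,n_2\}$ is fine, then there is a representation in which $\del_1-\del_2$ is a $(n_1,n_2)$-far number with respect to $\calN=\{n_1,n_2\}$, so in particular there is a constant $C>0$ with
$$
\left|\del_1-\del_2-\frac{k_1}{n_1^{\phi_{n_\ell;n_1}(m)}}-\frac{k_2}{n_2^{\phi_{n_\ell;n_2}(m)}}\right|\ge\frac{C}{n_\ell^m},\qquad \ell=1,2,\ \forall m\ge 0,\ k_1,k_2\in\Z.
$$
First I would argue that this inequality is impossible when $\alpha:=\log n_1/\log n_2$ is irrational. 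The point is that for $\ell=2$ the term $n_2^{-\phi_{n_2;n_2}(m)}=n_2^{-m}$ is exactly the scale on the right-hand side, while for $\ell=1$ the two denominators $n_1^{\phi_{n_1;n_1}(m)}=n_1^m$ and $n_2^{\phi_{n_1;n_2}(m)}=n_2^{\lfloor m\alpha^{-1}\rfloor}$ beat the scale $n_1^{-m}$ only up to a multiplicative error governed by the fractional part $\{m\alpha^{-1}\}$ (and symmetrically $\{m\alpha\}$ for $\ell=2$). When $\alpha$ is irrational, by Weyl equidistribution these fractional parts come arbitrarily close to $0$ along a subsequence of $m$, which forces one of the two approximating lattices $\frac1{n_1^{m}}\Z+\frac1{n_2^{\lfloor m/\alpha\rfloor}}\Z$ (resp. the $\ell=2$ version) to approximate the real number $\del_1-\del_2$ far better than $C n_\ell^{-m}$, contradicting far-ness. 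This is essentially the content already used in the necessity proof of Theorem \ref{mainresult01}, specialized to $d=1$; I would isolate it as the engine of the proof.

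Once $\alpha=\log n_1/\log n_2=p/q\in\Q$ with $\gcd(p,q)=1$, the conclusion is pure arithmetic: $q\log n_1=p\log n_2$ gives $n_1^{q}=n_2^{p}$. From $n_1^q=n_2^p$ with coprime exponents one deduces, via unique factorization, that there is an integer $\mathfrak n\ge 2$ with $n_1=\mathfrak n^{p}$ and $n_2=\mathfrak n^{q}$; concretely, for each prime $r$ the $r$-adic valuations satisfy $q\,v_r(n_1)=p\,v_r(n_2)$, so $p\mid v_r(n_1)$ and $q\mid v_r(n_2)$, and setting $v_r(\mathfrak n):=v_r(n_1)/p=v_r(n_2)/q$ defines the desired $\mathfrak n$ (which is $\ge 2$ since $n_1\ge 2$). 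Taking $s_1=p$, $s_2=q$ finishes this case.

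The main obstacle is the first step — making the equidistribution argument fully rigorous, i.e. turning "$\{m\alpha\}$ small infinitely often" into an actual violation of the far-number inequality with the right quantifiers on $k_1,k_2$ and the right power of $n_\ell$ on the right-hand side. The subtlety is that the two competing denominators $n_1^m$ and $n_2^{\lfloor m/\alpha\rfloor}$ live on slightly different scales, so one must choose $k_1,k_2$ (typically take $k_2=0$ and $k_1$ the nearest integer to $n_1^{m}(\del_1-\del_2)$, or vice versa) and track that the resulting error is $o(n_\ell^{-m})$ precisely when the relevant fractional part is small; one should also handle the degenerate possibility that $\del_1-\del_2$ is rational with a denominator interacting with $n_1,n_2$, which only makes the approximation better. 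I expect the bookkeeping here to mirror, but be somewhat more delicate than, the estimates \eqref{20200915eq01} and \eqref{20200911eq11} already appearing in the necessity argument. Everything after that is routine, and the higher-dimensional Theorem \ref{mainresult02} will then follow by applying Proposition \ref{20200922prop01} to each pair $(n_i,n_j)$ together with Corollary \ref{20200915cor01} and a short lemma upgrading pairwise common bases to a single common base.
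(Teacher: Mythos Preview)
Your overall plan coincides with the paper's: assume $\log n_2/\log n_1$ is irrational, contradict the far-number condition of Theorem~\ref{mainresult01}, and then extract the common base from $n_1^{q}=n_2^{p}$ via $p$-adic valuations. The arithmetic half is correct and routine. The gap is in the mechanism you propose for the first half. Equidistribution of $\{m\alpha\}$ (or $\{m\alpha^{-1}\}$) only tells you that the two \emph{sizes} $n_1^{m}$ and $n_2^{\lfloor m\alpha\rfloor}$ have ratio close to $1$ along a subsequence; it says nothing about how finely the lattice $n_1^{-m}\Z+n_2^{-\lfloor m\alpha\rfloor}\Z$ covers $\R$, which is what you must control to violate \eqref{20200922eq01}. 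That mesh equals $1/\mathrm{lcm}\bigl(n_1^{m},n_2^{\lfloor m\alpha\rfloor}\bigr)$ and is governed by the $\gcd$, not by the size ratio. Your concrete fallback ``$k_2=0$, $k_1$ the nearest integer to $n_1^{m}(\del_1-\del_2)$'' produces an error $\le \tfrac12 n_1^{-m}$ independently of any fractional part, so it can never defeat the far condition once $C<\tfrac12$; and as soon as you allow $k_2\neq 0$ you are forced into the $\gcd$ problem you were trying to bypass. (Toy picture: $2^{100}$ and $2^{99}$ are as close in size as one likes, yet $2^{-100}\Z+2^{-99}\Z=2^{-100}\Z$ is no finer than $2^{-100}\Z$.)

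The paper supplies exactly this missing $\gcd$ analysis. After clearing denominators the far condition becomes
\[
\bigl|\del\, n_2^{m} n_1^{\phi}-k_1 n_2^{m}-k_2 n_1^{\phi}\bigr|\ge C\,n_1^{\phi},\qquad \phi=\phi_{n_2;n_1}(m),
\]
and by B\'ezout the set $\{k_1 n_2^{m}+k_2 n_1^{\phi}:k_1,k_2\in\Z\}$ equals $\gcd(n_2^{m},n_1^{\phi})\cdot\Z$, so it suffices to show $\gcd(n_2^{m},n_1^{\phi})/n_1^{\phi}\to 0$ along some sequence of $m$. Writing $n_1=\prod p_\ell^{a_\ell}$ and $n_2=\prod p_\ell^{a'_\ell}$, one picks a prime $p_\ell$ (with $a_\ell\ge 1$) for which $\Psi(m):=a_\ell\phi_{n_2;n_1}(m)-a'_\ell m\ge 0$ infinitely often, and then $\gcd/n_1^{\phi}\le p_\ell^{-\Psi(m)}$. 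Irrationality enters here, but only to guarantee $a_\ell\cdot\frac{\log n_2}{\log n_1}-a'_\ell\neq 0$, so that $\Psi(m)$ grows linearly and is unbounded; this is a prime-exponent growth argument, not an equidistribution one. A symmetric case handles the possibility that $\Psi\le 0$ for all large $m$.
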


\begin{proof}
We prove by contradiction by assuming such an $\mathfrak n$ does not exist, which means that $\frac{\log n_2}{\log n_1}$ is irrational (see the related comments following the theorem statement in the Introduction).  We will crucially use this fact in the proof. 

Since $\{n_1, n_2\}$ is fine, by Theorem \ref{mainresult01}, for any $k_1, k_2 \in \Z$, we know the estimate \eqref{20200922eq01} holds, in particular, this implies that there exists some $C>0$, such that for any $m \ge 0$ and $k_1, k_2 \in \Z$, 
\begin{equation} \label{20200922eq02} 
\left|\del n_2^m n_1^{\phi_{n_2; n_1}(m)}-k_1n_2^m-k_2n_1^{\phi_{n_2; n_1}(m)} \right| \ge C n_1^{\phi_{n_2; n_1}(m)},
\end{equation}
where we denote $\del:=\del_1-\del_2$. Let us consider the following set
$$
\mathbb A(n_1, n_2; m):=\left\{ k_1 n_2^m+k_2 n_1^{\phi_{n_2; n_1}(m)}: k_1, k_2 \in \Z \right\}.
$$
Note that 
$$
\mathbb A(n_1, n_2; m)=\left\{ k \cdot \gcd \left(n_2^m, n_1^{\phi_{n_2; n_1}(m)}\right): k \in \Z \right\}, 
$$
which suggests that we can find $K_1, K_2 \in \Z$ such that 
\begin{equation} \label{20200922eq04}
\left|\del n_2^m n_1^{\phi_{n_2; n_1}(m)}-K_1n_2^m-K_2n_1^{\phi_{n_2; n_1}(m)} \right| \le \gcd \left(n_2^m, n_1^{\phi_{n_2; n_1}(m)}\right).
\end{equation} 
To use this, let us write 
$$
n_1=p_1^{a_1}p_2^{a_2} \dots p_L^{a_L}
$$
and 
$$
n_2=p_1^{a'_1}p_2^{a'_2} \dots p_L^{a'_L},
$$
where $\{p_1, \dots, p_L\}$ is a finite collection of primes and $a_\ell, a'_\ell \in \N, \ell=1, \dots, L$. Note that the $a_i$'s and $a'_i$'s only depend on $n_1$ and $n_2$, and independent of $m$.

We now consider two different cases. 

\medskip

\textit{Case I:} There exists infinitely many $m \in \N$ and some $\ell \in \{1, \dots, L\}$ such that
\begin{enumerate}
    \item [(1).] $a_\ell \ge 1$;
    
    \medskip
    
    \item [(2).] $p_\ell^{a_\ell \phi_{n_2; n_1}(m)} \ge p_\ell^{a_\ell'm}$.
\end{enumerate}
We make a remark that here one may presumably assume that $\ell$ may depend on the choice of $m$. However, since there are only finitely many choices for $\ell$, by pigeonholing, simply pick a fixed $\ell \in \{1, \dots, L\}$ and restrict attention to the sub-sequence of $m$ which satisfy \textit{Case I}. 

For simplicity, denote
$$
\calC_I:=\{m \in \N: m \ \textrm{satisfies the assumption of {\it Case I}}\}.
$$

For {\it Case I}, we note that contribution of the prime $p_\ell$ to the term $\gcd \left(n_2^m, n_1^{\phi_{n_2; n_1}(m)} \right)$ is at most $p_\ell^{a_\ell' m}$. On the other hand, we define a function $\Psi_1: \N \to \Z$ given by
$$
\Psi_1(m)=a_\ell \phi_{n_2; n_1}(m)-a'_\ell m. 
$$
Note that if $m \in \calC_I$, then 
\begin{enumerate}
    \item [(a).] $\Psi_1 (m) \ge 0$;
    
    \medskip
    
    \item [(b).] $\gcd \left(n_2^m, n_1^{\phi_{n_2; n_1}(m)}\right) \le \frac{n_1^{\phi_{n_2; n_1}(m)}}{p_\ell^{\Phi_1(m)}}$. 
\end{enumerate}

\medskip

We have the following \emph{claim}: $\Psi_1(m)$ is unbounded. 

\medskip

\textit{Proof of the claim:}
Since, 
$$
\Psi_1(m)=a_\ell \cdot \left\lfloor \frac{m \log n_2}{\log n_1} \right\rfloor-a_\ell'm
$$
we have 
\begin{equation} \label{20200922eq07}
\left(\frac{a_\ell \log n_2}{\log n_1}- a_\ell' \right) \cdot m-a_\ell \le \Psi_1(m) \le \left(\frac{a_\ell \log n_2}{\log n_1}- a_\ell' \right) \cdot m
\end{equation} 
however, since $\frac{\log n_2}{\log n_1}$ is irrational, using assertion (a), we can indeed conclude that
$$
\frac{a_\ell \log n_2}{\log n_1}- a_\ell'>0.
$$
This, together with the estimate \eqref{20200922eq07}, clearly implies the desired claim.  \hfill\fbox

\bigskip

By (a) and the claim above, there exists a $m \in \calC_I$ sufficiently large, such that
\begin{equation} \label{20200922eq10}
p_\ell^{-\Psi_1(m)}<\frac{C}{2}, 
\end{equation} 
where we recall that $C$ is defined in \eqref{20200922eq02}. Therefore, we have
\begin{eqnarray*}
\left|\del n_2^m n_1^{\phi_{n_2; n_1}(m)}-K_1n_2^m-K_2n_1^{\phi_{n_2; n_1}(m)} \right|%
&\le& \gcd \left(n_2^m, n_1^{\phi_{n_2; n_1}(m)}\right) \\
&\le& \frac{n_1^{\phi_{n_2; n_1}(m)}}{p_\ell^{\Psi_1(m)}} \\
&<& \frac{Cn_1^{\phi_{n_2; n_1}(m)}}{2},
\end{eqnarray*}
where in the first line above, we use \eqref{20200922eq04}, in the second to last estimate, we use assertion (b) above and in the last estimate, we use \eqref{20200922eq10}. This clearly contradicts \eqref{20200922eq02}. 

\bigskip

\textit{Case II:} Suppose {\it Case I} fails. This means there exists infinitely many $m \in \N$ and some $s \in \{1, \dots, L\}$ (independent of the choice of $m$), such that
\begin{enumerate}
    \item [(i).] $a_s' \ge 1$;
    
    \medskip
    
    \item [(ii).] $p_s^{a_s'm} \ge p_s^{a_s \phi_{n_2; n_1}(m)}$. 
\end{enumerate}
Similarly, we denote 
$$
\calC_{II}:=\left\{ m \in \N: m \ \textrm{satisfies the assumption of \textit{Case II}}\right\}. 
$$
The proof for the second case is similar to the first one, and we only sketch it here. We define $\Psi_2: \N \to \Z$ by 
$$
\Psi_2(m):=a_s'm-a_s \phi_{n_2; n_1}(m)
$$
Note that if $m \in \calC_{II}$, then
\begin{enumerate}
    \item [(c).] $\Psi_2(m) \ge 0$;
    
    \medskip
    
    \item [(d).] $\gcd \left(n_2^m, n_1^{\phi_{n_2; n_1}(m)} \right) \le \frac{n_2^m}{p_s^{\Psi_2(m)}}$.
\end{enumerate}
Similar as above, we can show that $\Psi_2(m)$ is unbounded. Now to use the assertion (d) above, we rewrite \eqref{20200922eq02} a little bit by
\begin{equation} \label{20200922eq21}
\left| \del n_2^m n_1^{\phi_{n_2; n_1}(m)}-k_1n_2^m-k_2n_1^{\phi_{n_2; n_1}(m)} \right| \ge \widetilde{C}n_2^m.
\end{equation} 
This is because $n_1^{\phi_{n_2; n_1}(m)} \ge \frac{n_2^m}{n_1}$ and we may let $\widetilde{C}=\frac{C}{n_1}$.

Finally, let us take $m \in \calC_{II}$ sufficiently large, such that
\begin{equation} \label{20200922eq22}
p_s^{-\Psi_2(m)} \le \frac{\widetilde{C}}{2}
\end{equation} 
The desired contradiction will then follow from assertion (d), \eqref{20200922eq04}, \eqref{20200922eq21} and \eqref{20200922eq22}. 

The proof is complete. 
\end{proof}

Finally we turn to the proof of Theorem \ref{mainresult02}.

\begin{proof} [Proof of Theorem \ref{mainresult02}]
Theorem \ref{mainresult02} is an easy consequence of Proposition \ref{20200922prop01} and Corollary \ref{20200915cor01}, and we would like to leave the details to the interested reader. 
\end{proof}

\begin{rem}
 Notice that \eqref{20200922eq02} reduces to
 \[
 \left|\del n_2^m n_1^{\phi_{n_2; n_1}(m)}-k \cdot \gcd\left(n_1^{\phi_{n_2; n_1}(m)}, n_2^m \right) \right|, \quad \textrm{for some} \ k \in \Z.
 \]
 If we assume adjacency, this means that 
 $$
 \delta \cdot \textrm{lcm}\left(n_1^{\phi_{n_2; n_1}(m)}, n_2^m\right)-k \neq 0,
 $$ 
 that is, $\delta \neq \frac{k}{\textrm{lcm}\left(n_1^{\phi_{n_2; n_1}(m)}, n_2^m\right)}$, a natural analogue of our previous work \cite{AHJOW}.  Since we now know that $n_1$ and $n_2$ are powers of the same base, this exactly replicates the fact that $\frac{k}{n^m}$ are not $n$-far, which appeared in (\cite{AHJOW}, Corollary 2.10).
\end{rem}

\end{document}